\newtheorem{lemma}{Lemma}
\newtheorem{theorem}[lemma]{Theorem}
\newtheorem{corollary}[lemma]{Corollary}
\newtheorem{prop}[lemma]{Proposition}
\theoremstyle{definition}
\newtheorem{definition}[lemma]{Definition}
\newtheorem{fact}[lemma]{Fact}
\newtheorem{numberedRemark}[lemma]{Remark}
\newtheorem{example}[lemma]{Example}
\theoremstyle{definition}
\theoremstyle{definition}
\newtheorem*{notation}{Notation}
\def \Nats {\mathds{N}}
\newcommand{\set}[1]{\{#1\}}
\newcommand{\setarg}[2]{\{#1\ |\ #2\}}
\newcommand{\setcol}[2]{\{#1 : #2\}}
\def \col {\mathfrak{c}}
\def \domcolor {\widetilde{\col}}
\def \descolor {\hat{\col}}
\newcommand{\pairs}[1]{[#1]^2}
\DeclareMathOperator{\CB}{CB}
\newcommand{\CBr}[1]{\CB(#1)}
\DeclareMathOperator{\tree}{T}
\newcommand{\subtree}[2]{\tree^{#1}(#2)}
\newcommand{\treeorder}{<^*}
\newcommand{\treedescendant}{\lhd^*}
\DeclareMathOperator{\Fan}{Fan}
\newcommand{\subfan}[1]{\Fan^{-}(#1)}
\newcommand{\ramseynotation}[4]{#1 \to_{cl} (#2)^{#3}_{#4}}
\newcommand{\closedramseynumber}[2]{R^{cl}(#1,#2)}
\DeclareMathOperator{\ordertype}{\mathit{ord}}
\newcommand{\otp}[1]{\ordertype(#1)}
\newcommand{\neighbours}[1]{N(#1)}
\newcommand{\neighboursInSet}[2]{N(#1)\cap #2}
\newcommand{\layer}[2]{L^{#1}_{#2}}
\newcommand{\ordPersFunc}[1]{\rho_{#1}}
\newcommand{\oppress}{\mathrel{\perp}}
\newcommand{\harass}{\mathrel{_\omega{\perp}}}
\newcommand{\subcof}{\mathrel{\subseteq_{\rm cof}}}
\begin{document}
\title{The closed ordinal Ramsey number $R^{cl}(\omega^2,3) = \omega^6$}
\author{Omer Mermelstein}
\address[1]{Department of Mathematics\\
Ben-Gurion University of the Negev\\
Beer-Sheva 8410501, Israel}
\address[2]{Department of Mathematics\\
University of Wisconsin--Madison\\
WI 53706, USA}
\email{omer@math.wisc.edu}
\date{}
\keywords{Partition calculus, countable ordinals}
\subjclass[2010]{Primary 03E02. Secondary 03E10}

\begin{abstract}
Closed ordinal Ramsey numbers are a topological variant of the classical (ordinal) Ramsey numbers. We compute the exact value of the closed ordinal Ramsey number $R^{cl}(\omega^2,3) = \omega^6$.
\end{abstract}

\maketitle

\section{introduction}
For ordinals $\beta$ and $\alpha$ write $\ramseynotation{\beta}{\alpha_0,\alpha_1}{2}{}$
to mean that for every pair-colouring $\col: [\beta]^2\to \set{0,1}$ there exist some $i\in\set{0,1}$ and $X\subseteq \beta$ of order type $\otp{X} = \alpha_i$ such that $X$ is closed in its supremum, and $[X]^2\subseteq \col^{-1}(\set{i})$. Should such an ordinal exist, let $R^{cl}(\alpha_0,\alpha_1)$ denote the least such ordinal. Call $R^{cl}(\alpha_0,\alpha_1)$ the \emph{closed ordinal Ramsey number of $(\alpha_0,\alpha_1)^2$}.

Caicedo and Hilton \cite[Section 7]{CaicedoHilton} proved the upper bound $\closedramseynumber{\omega^2}{k} \leq \omega^\omega$, for every natural $k>0$. For $k=3$, the existing lower bound $\closedramseynumber{\omega^2}{3} \geq \omega^{3}$ is a consequence of \cite[Proposition 3.1]{CaicedoHilton}. In this paper, we will calculate the exact value $\closedramseynumber{\omega^2}{3} = \omega^6$.

We achieve the bound $\closedramseynumber{\omega^2}{3}\leq\omega^6$ by a combinatorial analysis of any arbitrary ``canonical'' pair-colouring of $\omega^6$ in two colours. Canonical colourings were presented and discussed in \cite{MermOrd}, where it was shown that, for our purposes, every pair-colouring can be assumed to be canonical.

The bound $\closedramseynumber{\omega^2}{3}\geq\omega^6$ is achieved by proving the more general result: for every natural $k$, $\closedramseynumber{\omega^{k+1}}{3}\geq \omega^{5k+1}$. This result is given by a single colouring $\col:\pairs{\omega^\omega}\to \set{0,1}$ such that for each $k\in\omega$ and $\theta<\omega^{5k+1}$, the restriction $\col\upharpoonright\pairs{\theta}$ demonstrates $\closedramseynumber{\omega^{k+1}}{3} > \theta$.

For a history of partition relations and Rado's arrow notation see \cite{HajLar}. The ordinal partition calculus was introduced by Erd\H{o}s and Rado in \cite{ErdRad}, and topological partition calculus was considered by Baumgartner in \cite{Baum}. Baumgartner's work was continued in recent papers on topological (closed) ordinal partition relations by Hilton, Caicedo-Hilton, and Pi\~{n}a, see \cite{HilPige},\cite{CaicedoHilton}, and the sequence of works starting with \cite{Pina}. See also \cite{OW19} and the author's \cite{MermOrd}.

\section{preliminaries}
We use lowercase greek letters $\alpha,\beta,\gamma,\dots$ to denote ordinals. For $B$ a set of ordinals, write $\ordertype(B)$ for the order-type of $B$. Write $A\subcof B$, to mean that $A$ is a cofinal subset of $B$.

For any nonzero $\alpha$ there exist a unique $l\in\Nats$, a sequence of ordinals $\gamma_1 >\dots > \gamma_l$, and a sequence of nonzero natural numbers $m_1,\dots,m_l$ such that
\[
\alpha = \omega^{\gamma_1}\cdot m_1 + \omega^{\gamma_2}\cdot m_2 + \dots + \omega^{\gamma_l}\cdot m_l.
\]
Call this representation of $\alpha$ the \emph{Cantor normal form} of $\alpha$. The \emph{Cantor-Bendixson rank} (CB rank) of $\alpha$ is $\gamma_l$, and is denoted $\CBr{\alpha}$. For the ordinal $\alpha=0$, we define $\CBr{0}=0$.

We say that $\beta\treeorder\alpha$ whenever $\alpha = \beta + \omega^\gamma$ for some nonzero ordinal $\gamma$ with $\gamma > \CBr{\beta}$. Equivalently, for some $\gamma > \CBr{\beta}$, $\alpha$ is the least ordinal of $\CB$ rank $\gamma$ with $\beta\leq \alpha$. We write $\beta\treedescendant\alpha$ if $\alpha$ is the unique immediate successor of $\beta$ in $\treeorder$. Denote $\subtree{}{\alpha} = \set{\alpha}\cup\setarg{\beta}{\beta\treeorder\alpha}$ and $\subtree{=n}{\alpha}=\setarg{\beta\in\subtree{}{\alpha}}{\CBr{\beta}=n}$. If $\CBr{\alpha}$ is a successor ordinal, denote $\subfan{\alpha} = \setarg{\beta}{\beta\treedescendant\alpha}$.

It is useful to visualize $\omega^k+1$ under the $\treedescendant$ relation as an $\omega$-regular (bar the terminal nodes), rooted, directed tree of height $k+1$. The root is $\omega^k$, the unique point of CB rank $k$. The descendants of the root are $\setcol{\omega^{k-1}\cdot i}{i\in\omega}$, all the points of CB rank $k-1$, and so on. The leaves are the points of CB rank $0$. In line with the standard order on ordinals, it is preferable to visualize the root as being on top and the leaves on the bottom. Then, the $n$-th level corresponds to the points of CB rank $n$. See Figure \ref{figure} for a visualization.

\begin{figure}[h]
\caption{A schematic of $(\omega^3 +1,\treedescendant)$}
\label{figure}
\begin{tikzpicture}[x=8,y=30]
\begin{scope}[>=latex]
\foreach \i in{0}
{
	\node at (\i,4) {$\omega^3$};
	
	\node at (\i-14, 2.5) {$\omega^2$};
	\node at (\i, 2.5) {$\omega^2\cdot 2$};
	\node at (\i+14, 2.5) {$\omega^2\cdot 3$};
	
	\node at (\i-14 -4,1) {$\omega$};
	\node at (\i-14,1) {$\omega\cdot 2$};
	\node at (\i-14 +4,1) {$\omega\cdot 3$};

	\foreach \j in{0,14}
	{
		\draw [<-, shorten <=7pt, shorten >=7pt] (\i,4) -- (\i+\j,2.5);
		\foreach \k in{-4,0,4}
			{
			\node[fill=black,circle, inner sep=1pt] at (\i+\j +\k,1) {};
			\draw [<-, shorten <=7pt] (\i+\j,2.5) -- (\i+\j +\k,1);
			\foreach \l in{-1,0,1}
				{
				\node[fill=black,circle, inner sep=0.8pt] at (\i+\j+\k+\l,0) {};
				\draw [<-, shorten <=3.5pt] (\i+\j+\k,1) -- (\i+\j +\k+\l,0);
				}
			\node[label=right:\tiny $...$] at (\i+\j+\k+1/2,0) {};
			}
		\node[label=right:$\dots$] at (\i+\j+3.5,1) {};
	}
	\foreach \j in{-14}
	{
		\draw [<-, shorten <=7pt, shorten >=7pt] (\i,4) -- (\i+\j,2.5);
		\foreach \k in{-4,0,4}
			{
			\draw [<-, shorten <=7pt,shorten >=4pt] (\i+\j,2.5) -- (\i+\j +\k,1);
			\foreach \l in{-1,0,1}
				{
				\node[fill=black,circle, inner sep=0.8pt] at (\i+\j+\k+\l,0) {};
				\draw [<-, shorten <=3.5pt] (\i+\j+\k,1) -- (\i+\j +\k+\l,0);
				}
			\node[label=right:\tiny $...$] at (\i+\j+\k+1/2,0) {};
			}
		\node[label=right:$\dots$] at (\i+\j+4.5,1) {};
	}
	\node[label=right:\Huge$\dots$] at (\i+15,2.5) {};
}
\end{scope}
\end{tikzpicture}
\end{figure}

It is advisable that the reader takes a moment to locate in the figure the objects $\subtree{}{\alpha}$, $\subtree{=n}{\alpha}$, $\subfan{\alpha}$ for some $\alpha\leq\omega^3$, $n\leq \CB(\alpha)$. Another suggestion is to find in the figure a few copies of $\omega^2$ -- some closed in their supremum and some not.

When we ``thin out'' a set of ordinals $X$ of order type $\omega^k$, we mean that we take $Y\subseteq X$ such that $(Y,\treeorder)$ is isomorphic to $(X,\treeorder)$. Preserving the relation $\treeorder$ guarantees that if $X$ was closed in its supremum, then also $Y$ is, and furthermore $\otp{Y} = \otp{X}$. Unlike the actual order on the points, the order type $\omega^k$ can be read off of $\treeorder$.

\medskip
Let $G=(V,E)$ be a graph. We only consider graphs where the edge relation is symmetric. We identify the graph $G$ with a colouring $\col:[V]^2\to \set{0,1}$ by taking $\col(v,u) = 1$ if and only if $(v,u)\in E$. For $v\in V$, denote $\neighbours{v} = \setarg{u\in V}{(v,u)\in E}$. For $U\subseteq V$, denote $\neighbours{U} = \bigcup_{v\in U}\neighbours{v}$. We say that a set of vertices $U\subseteq V$ is a \emph{clique} if $\pairs{U}\subseteq E$ and that it is \emph{independent} if $\pairs{U}\cap E = \emptyset$.

\begin{definition}
	Let $G = (\delta,E)$ be some graph on an ordinal $\delta$. Let $A,B\subseteq \delta$ be infinite, disjoint and without maxima.
	\begin{itemize}
		\item
		Write $A\oppress B$ to mean that for all $X$, if $X\subcof A$, then $B\setminus\neighbours{X}$ is finite.
		\item
		Write $A\harass B$ if $A\oppress B$ and in addition $\neighboursInSet{a}{B}$ is finite for every $a\in A$.
	\end{itemize}
\end{definition}

\begin{lemma}[\hspace{-0.001pt}\cite{MermOrd}, Lemma 4.2]
	\label{harassmentLemma}
	Let $G =(\delta,E)$ for $\delta$ countable. Let $A,B\subseteq \delta$ be such that $A\harass B$. Then there is some $A_0\subcof A$ and some $B_0\subseteq B$, cofinite in $B$, such that $\neighboursInSet{b}{A_0}$ is cofinite in $A_0$, for all $b\in B_0$.
\end{lemma}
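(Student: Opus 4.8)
The plan is to argue by contradiction: assuming no pair $(A_0,B_0)$ as in the statement exists, I will manufacture a cofinal subset of $A$ whose neighbourhood omits infinitely many points of $B$, contradicting $A\oppress B$. First I would reduce to the case $\otp{A}=\omega$. Since $\delta$ is countable, $A$ is a countable set of ordinals with no maximum, hence of cofinality $\omega$, so there is $A'\subcof A$ with $\otp{A'}=\omega$; one checks at once that $A'\harass B$, because any cofinal subset of $A'$ is cofinal in $A$ and the finiteness condition ``$\neighboursInSet{a}{B}$ finite'' is inherited. As every infinite subset of $A'$ is automatically cofinal in $A'$, and hence in $A$, it suffices to find an infinite $A_0\subseteq A'$ and a cofinite $B_0\subseteq B$ with $\neighboursInSet{b}{A_0}$ cofinite in $A_0$ for all $b\in B_0$. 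Enumerate $A'=\{a_i:i\in\omega\}$ increasingly, and for $b\in B$ put $S_b=\{i\in\omega:a_i\in\neighbours{b}\}$ and $T_b=\omega\setminus S_b$. Then $A'\oppress B$ says that for every infinite $J\subseteq\omega$ the set $\{b\in B:S_b\cap J=\emptyset\}$ is finite; the extra clause of $\harass$ says that $\{b\in B:i\in S_b\}$ is finite, i.e.\ $\{b\in B:i\in T_b\}$ is cofinite, for each $i$; and, writing $A_0=\{a_i:i\in I\}$, the goal becomes: there are an infinite $I\subseteq\omega$ and a cofinite $B_0\subseteq B$ with $I\cap T_b$ finite for every $b\in B_0$.

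The negation of this goal unpacks to: for every infinite $I\subseteq\omega$ the set $G_I:=\{b\in B:I\cap T_b\text{ is infinite}\}$ is infinite (if some $G_I$ were finite, take $B_0=B\setminus G_I$). Granting this, I would build recursively a strictly increasing sequence $j_0<j_1<\cdots$ in $\omega$ together with distinct vertices $c_0,c_1,\dots\in B$ maintaining, after stage $m$, the invariants (i) $\{j_0,\dots,j_m\}\subseteq T_{c_l}$ for all $l\le m$, and (ii) $U_m:=T_{c_0}\cap\cdots\cap T_{c_m}$ is infinite (convention $U_{-1}=\omega$). At stage $m$, using that $U_{m-1}$ is infinite pick $j_m\in U_{m-1}$ with $j_m>j_{m-1}$, then pick $c_m$, distinct from $c_0,\dots,c_{m-1}$, inside $G_{U_{m-1}}\cap\bigcap_{p\le m}\{b:j_p\in T_b\}$: this is infinite, since $G_{U_{m-1}}$ is infinite by the negated goal applied to $U_{m-1}$, each $\{b:j_p\in T_b\}$ is cofinite by the $\harass$-clause, and intersecting an infinite set with finitely many cofinite sets leaves an infinite set, from which removing the finitely many earlier $c_l$ still leaves room. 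Then $c_m\in G_{U_{m-1}}$ makes $U_m=U_{m-1}\cap T_{c_m}$ infinite, and membership in the remaining sets gives invariant (i).

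Finally, set $J=\{j_m:m\in\omega\}$. One checks $J\subseteq T_{c_l}$ for every $l$: the terms $j_p$ with $p\le l$ lie in $T_{c_l}$ by the choice of $c_l$, and those with $p>l$ lie in $U_{p-1}\subseteq T_{c_l}$. Hence $S_{c_l}\cap J=\emptyset$ for all $l$, so $\{b\in B:S_b\cap J=\emptyset\}$ contains the infinitely many distinct vertices $c_l$; but $\{a_j:j\in J\}$ is an infinite, hence cofinal, subset of $A$, so this contradicts $A\oppress B$. This contradiction proves the goal, and unwinding the reduction gives the lemma, with $A_0=\{a_i:i\in I\}$.

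The crux is the stage-$m$ choice of $c_m$: a single vertex that simultaneously avoids the finitely many vertices already used, misses all of $a_{j_0},\dots,a_{j_m}$, and still leaves an infinite reservoir $U_m$ for placing every later $j_p$. This is exactly where the two halves of $\harass$ are played against the negated goal — the finiteness clause supplying cofiniteness of each $\{b:j_p\in T_b\}$, and $\oppress$ (via its negated-goal form) supplying the infinite set $G_{U_{m-1}}$. Everything else — the cofinality reduction at the start and the verification that $J$ works — is routine.
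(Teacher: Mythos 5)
The paper does not actually prove this lemma --- it imports it verbatim from \cite{MermOrd} (Lemma 4.2) --- so there is no in-paper argument to compare yours against; I can only assess your proof on its own terms, and it is correct. Your reduction to $\otp{A'}=\omega$ is legitimate (a countable set of ordinals without maximum has cofinality $\omega$, and both halves of $A\harass B$ pass to a cofinal subset of $A$), and after that the problem really is the combinatorial statement about the sets $T_b\subseteq\omega$ that you isolate. The translation is faithful: $A\oppress B$ becomes ``$\{b:S_b\cap J=\emptyset\}$ is finite for every infinite $J$'', the second clause of $\harass$ becomes ``$\{b:j\in T_b\}$ is cofinite for each $j$'', and the negated conclusion becomes ``$G_I$ is infinite for every infinite $I$'' (right, since a finite $G_I$ would yield $B_0=B\setminus G_I$). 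The diagonalization then works exactly as you describe: at stage $m$ the set $G_{U_{m-1}}\cap\bigcap_{p\le m}\{b:j_p\in T_b\}$ is an infinite set minus finitely many points, so $c_m$ exists; invariant (ii) is preserved because $c_m\in G_{U_{m-1}}$, and invariant (i) because $j_m\in U_{m-1}\subseteq T_{c_l}$ for $l<m$. The final verification that $J=\{j_m\}_{m\in\omega}$ is disjoint from every $S_{c_l}$ correctly splits into $p\le l$ (invariant (i)) and $p>l$ (via $U_{p-1}\subseteq T_{c_l}$), producing infinitely many $b$ with $S_b\cap J=\emptyset$ against $A\oppress B$. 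I see no gap; this is a clean, self-contained proof of the cited lemma.
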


For the full definition of a canonical colouring, $\domcolor$ and $\descolor$, refer to subsection 2.2 and section 3 of \cite{MermOrd}. In particular, see definitions 3.3, 3.9 and 3.10 there. For this paper, we specialize the definition and results we use to the special case of a colouring of $\omega^k$ in two colours, for a natural $k$.

Fix $\mathfrak{F}$ to be the filter of cofinite subsets of $\omega$. Define $\mathfrak{F}^1 = \mathfrak{F}$ and $\mathfrak{F}^{n+1}$ on $\omega^{n+1}$ inductively by taking the product filter on $\omega^{n+1}\cong\omega\times\omega^n$. That is, $X\in \mathfrak{F}^{n+1}$ if and only if $\setcol{\alpha\in\omega}{\setcol{\beta\in\omega^n}{(\alpha,\beta)\in X}\in \mathfrak{F}^n}\in \mathfrak{F}^1$. For $X$, a set of ordinals with $\otp{X} = \omega^k$ witnessed by $\rho:X\to \omega^k$, we say that $Y\subseteq X$ is a $k$-\emph{large} set in $X$ if $\rho[Y]\in \mathfrak{F}^k$. If $k$ is clear from context, we may omit it and simply say that $Y$ is \emph{large} in $X$.

Say that a graph (colouring) on $\omega^k$ is \emph{canonical} if
\begin{enumerate}[(i)]
\item 
For every $\theta \leq \omega^k$, $\alpha<\omega^k$ and $l\leq k$, either $\subtree{=l}{\theta}\cap \neighbours{\alpha}$ or $\subtree{=l}{\theta}\setminus \neighbours{\alpha}$ is an $l$-large set in $\subtree{=l}{\theta}$.
\item
Additionally, if $\alpha=\theta$, then the $l$-large set in $\subtree{=l}{\theta}$ above is the entirety of $\subtree{=l}{\theta}$.
\item
Finally, for $\theta = \omega^k$, whether the $l$-large set in $\subtree{=l}{\theta}$ is contained in $\neighbours{\alpha}$ or disjoint from $\neighbours{\alpha}$ is determined only by $\CB(\alpha)$.
\end{enumerate}
For $k>j>l$, denote by $\descolor(j,l)\in\set{0,1}$ the ``colour'' by which every $\alpha\in \subtree{=j}{\omega^k}$ is connected to every $\beta\in \subtree{=l}{\alpha}$.
\\
For $k>j,l$, denote by $\domcolor(j,l)\in\set{0,1}$ the ``colour'' by which every $\alpha\in \subtree{=j}{\omega^k}$ is connected to an $l$-large set in $\subtree{=l}{\omega^k}$.

\begin{example}
The following is the edge-set of a canonical graph on $\omega^2$:
\[
\setcol{\set{\omega\cdot k,\omega\cdot l + n}}{l>k>n>0} \cup
\setcol{\set{\omega\cdot k + k',\omega\cdot l + l'}}{k<l, l'>k'>0} 
\]
For this graph: $\descolor(1,0) = 0$, $\domcolor(1,1) = 0$, $\domcolor(0,0) = 1$, $\domcolor(1,0) = 0$, $\domcolor(0,1) = 0$.
\end{example}

\begin{numberedRemark}
In \cite{MermOrd}, the filter $\mathfrak{F}^n$ was defined smaller, hence a canonical colouring there is more restrictive. In this paper, we will not need that extra strength.
\end{numberedRemark}

The following theorem allows us, for our purposes, to assume that every arbitrary colouring we encounter is canonical.

\begin{theorem}[\hspace{-0.001pt}\cite{MermOrd}, Proposition 3.11]
For every natural $k$ and colouring $\col:\pairs{\omega^k}\to\set{0,1}$, there exists $X\subseteq \omega^k$, a subset of $\omega^k$ close in its supremum of order type $\omega^k$, such that the restriction of $\col$ to $X$ is a canonical colouring.
\end{theorem}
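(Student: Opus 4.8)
The plan is to prove the statement by induction on $k$, with Ramsey's theorem as base case and, for the inductive step, a recursion over $\omega$ many ``blocks'' whose typical step combines the induction hypothesis, a fusion, and several pigeonholes. For $k=1$ a colouring $\col\colon\pairs{\omega}\to\set{0,1}$ has an infinite monochromatic $X\subseteq\omega$; any infinite subset of $\omega$ has order type $\omega$, is closed in its supremum, and --- being monochromatic --- joins every vertex to all others in a single colour, so clauses (i)--(iii) hold at once (for $\theta<\omega$, $\subtree{}{\theta}=\set{\theta}$ and the clauses are vacuous; for $\theta=\omega$, $\subtree{=0}{\theta}$ is all of $X$). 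So suppose the statement for $k$, fix $\col\colon\pairs{\omega^{k+1}}\to\set{0,1}$, and note that in $(\omega^{k+1},\treedescendant)$ the children of the root are the level-$k$ nodes $\omega^k\cdot m$ ($m\geq 1$), each carrying below it a copy of the tree on $\omega^k$; call these $\omega$ many copies the \emph{blocks}. The goal is to pick, for cofinitely many $m$, the root $\omega^k\cdot m$ and a canonical sub-copy $X_m$ of $\omega^k$ inside block $m$ so that on $X=\bigcup_m\left(X_m\cup\set{\omega^k\cdot m}\right)$ the extra structure --- the colours among the chosen roots, from each root into its own block, across blocks, and from each vertex to the global levels $\subtree{=l}{\omega^{k+1}}$ --- is canonical.

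I would set up two tools. Fix a nonprincipal ultrafilter $U\supseteq\mathfrak{F}$ on $\omega$ and its Fubini powers $U^n$ on $\omega^n$, so each $U^n$ refines $\mathfrak{F}^n$ and decides every set. Prove two routine lemmas: canonicity, with its derived data $\descolor$ and $\domcolor$, is inherited by every $\treeorder$-preserving sub-copy (induction unwinding the product filter, using that $\mathfrak{F}^n$-largeness survives intersection with a $\treeorder$-sub-copy); and every $U^n$-large subset of a set of order type $\omega^n$ contains a $\treeorder$-copy of $\omega^n$ (pick infinitely many $U^{n-1}$-large sections and recurse). Together they let ``largeness'' be deferred: a recorded \emph{commitment} that certain vertices meet a fixed $U^\bullet$-large set of nodes in one colour turns, once that set has been relabelled into $X$, into a genuine $\mathfrak{F}^\bullet$-largeness statement in $X$ --- indeed monochromaticity, if we relabel so that exactly that set survives.

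The recursion: after an initial pigeonhole over the $\omega$ roots (fixing their mutual colours and the values $\descolor(k,l)$) discard all but infinitely many blocks, then process the survivors in order, never touching a block before its turn, while carrying a list of commitments. When block $m$ comes up: (1) restrict it --- inside $U$-large sets, hence legally --- so that, at each level, its nodes of that level lie in the intersection of all standing commitments aimed at block $m$ there; this is an intersection of only finitely many sets (because earlier commitments were recorded uniformly over same-level vertices), hence still $U^\bullet$-large, and afterwards every frozen vertex meets all of the relevant part of block $m$ in its committed colour. (2) Apply the induction hypothesis to obtain a canonical $X_m$, then thin it further (legitimate by the inheritance lemma) so that the root $\omega^k\cdot m$ meets each level of $X_m$ monochromatically, and so that $X_m$'s internal data and its behaviour towards the global levels are ``on pattern'' with the earlier blocks --- what clauses (ii), (iii) and the well-definedness of $\descolor(j,l)$, $\domcolor(j,l)$ demand --- arranged by pigeonholing over levels. (3) Manufacture and record commitments from $X_m$ towards the not-yet-frozen blocks: by a fusion inside $X_m$ handling one (future block, target level) pair at a time, thin $X_m$ at the source level so that a single colour and a single $U^\bullet$-large subset of the target block's nodes witness the commitment for all source-level vertices at once --- the existence of such a single witnessing set for one target being the content of a partition lemma in the spirit of Lemma \ref{harassmentLemma}. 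Finally let $X=\bigcup_m\left(X_m\cup\set{\omega^k\cdot m}\right)$: the second tool turns every commitment into honest $\mathfrak{F}^\bullet$-largeness in $X$, so clause (i) holds at the global node; the induction hypothesis and step (2) give clauses (i)--(ii) everywhere; the pigeonholes give clause (iii); and since cofinally many blocks were kept and $\treeorder$ was preserved throughout, $X$ is closed in its supremum and has order type $\omega^{k+1}$.

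The main obstacle is step (3) and, with it, the simultaneous coordination of the infinitely many cross-block conditions. Three features fight against us: $\mathfrak{F}^n$- and even $U^n$-largeness is not preserved under arbitrary thinning; a $\treeorder$-sub-copy cannot in general be taken monochromatic for a prescribed colouring; and an infinite intersection of large sets need not be large. The fixed ultrafilters neutralise the first by making ``large'' a stationary target, provided one obeys the discipline of freezing each block in one pass, with every active constraint present, and never re-thinning it; the fusion and the partition lemma together neutralise the other two by producing, one target at a time, genuinely uniform (hence finitely describable) commitments. The remaining points --- the degenerate levels $l=\CBr{\theta}$, where $\subtree{=l}{\theta}$ is a singleton, and the bookkeeping of the pigeonholes --- I expect to be routine.
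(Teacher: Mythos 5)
The paper itself offers no proof of this statement; it is imported wholesale from \cite{MermOrd} (Proposition 3.11). So I can only judge your proposal on its merits. The overall architecture --- induction on $k$, decomposition of $\omega^{k+1}$ into blocks below the nodes $\omega^k\cdot m$, a fixed nonprincipal ultrafilter and its Fubini powers to decide largeness, blocks frozen one at a time --- is the right one, and your two auxiliary lemmas (inheritance of canonicity under $\treeorder$-sub-copies; extraction of a $\treeorder$-copy from a $U^n$-large set) are indeed routine and true. The gap is in step (3), and it propagates into step (1). You ask for a single $U^\bullet$-large set $S$ in the target level, together with a thinning $A'$ of the source level, such that every vertex of $A'$ meets every vertex of $S$ in one colour; step (1) then depends on this when it asserts that, after intersecting finitely many commitments, ``every frozen vertex meets all of the relevant part of block $m$ in its committed colour''. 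No such uniform witnessing set exists in general: colour the pair $a_ib_j$ by whether $i<j$ (both sides enumerated increasingly --- exactly the colouring ``$\alpha<\beta$'' that these graphs actually contain, cf.\ $E^n_2$ in Section 4). For any infinite $A'$, any infinite $S$, and either colour, there is a bad pair, so no amount of thinning the source repairs it; and Lemma \ref{harassmentLemma} does not supply it either, since its conclusion is that $\neighboursInSet{b}{A_0}$ is \emph{cofinite} in $A_0$ for each $b$ separately --- a per-vertex statement, not containment of a common set. Note also that clause (i) never demands full monochromaticity between a vertex and a whole level of another block, only largeness or co-largeness of $\neighbours{\alpha}$ there, so your step (1) is simultaneously too strong to be achievable and stronger than what is needed.

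The repair keeps your skeleton but moves the fusion from the source to the target. Record only per-vertex commitments: for each frozen $\alpha$ and each future target level, the $U^\bullet$-decided side of $\neighbours{\alpha}$. When block $m'$ is processed, enumerate the countably many frozen vertices and build its sub-copy branch by branch, at stage $n$ placing all further branches inside the intersection of the first $n$ committed sets --- a finite intersection of $U^\bullet$-large sets at every stage. This yields genuine $\mathfrak{F}^\bullet$-largeness or co-largeness of $\neighbours{\alpha}$ against the final level for every frozen $\alpha$, and \emph{that} property, unlike your uniform containment, does survive all subsequent thinning, which is what lets the recursion close. The opposite direction of clause (i) --- a vertex of a later block against an already-frozen level, which can no longer be thinned --- is the genuinely delicate point your sketch does not isolate, and it is there, not in step (3), that a transfer in the spirit of Lemma \ref{harassmentLemma} earns its keep. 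With step (3) restated this way and that direction addressed, your outline becomes a correct account of what \cite{MermOrd} proves.
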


\begin{lemma}[\hspace{-0.001pt}\cite{MermOrd}, Lemma 4.3]
	\label{edgeColourScarce}
	Fix some canonical triangle-free $G=(\delta,E)$, where $\delta = \omega^k$ for some $k$ natural, with corresponding colouring $\col:\pairs{\delta}\to 2$. If there exists no independent $X\subseteq \delta$ closed in its supremum with $\otp{X} = \omega^2$, then the following statements hold
	\begin{enumerate}
		\item
		For a fixed $l$, there is at most one $j<l$ such that $\descolor(j,l) = 1$.
		\item
		For a fixed $j$, there is at most one $l$ such that $\domcolor(j,l) = 1$.
		\item
		For a fixed $l$, there is at most one $j$ such that $\domcolor(j,l) = 1$.
	\end{enumerate}
\end{lemma}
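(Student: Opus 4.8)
The plan is to prove all three clauses by contradiction, along the same lines: assuming one of the ``at most one'' statements fails, I will produce either a colour-$1$ triangle or a subset of $\delta$ closed in its supremum of order type $\omega^2$ on which $\col$ is constantly $0$ --- in either case contradicting the standing hypotheses. The two standard consequences of triangle-freeness I will use throughout are: if a vertex $v$ is joined by colour $1$ to a set $S$, then $S$ is independent; and if $v$ is joined by colour $1$ to each of $S,S'$, then there is no edge between $S$ and $S'$. I will also use two structural facts. First, by canonicity the colour of a pair $\set{x,y}$ is determined by a bounded amount of data: which sets $\subtree{=\cdot}{\cdot}$ the two points lie in, the relevant $\descolor$- and $\domcolor$-values, and (when the governing node $\theta$ is not $\omega^k$) one ``free'' bit per point, coming from clause (i), that is constant on a large subset and can be normalised to $0$ by thinning. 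Second, when $\CBr{\theta}=l+1$ the set $\subtree{=l}{\theta}$ is a closed $\omega$-sequence converging to $\theta$; consequently a copy of $\omega^2$ closed in its supremum is, up to thinning that preserves $\treeorder$, exactly a node $r$ with $\CBr{r}=l+2$, $\omega$ many of its $\treeorder$-children $\epsilon_0<\epsilon_1<\cdots$, and for each $\epsilon_i$ an $\omega$-subsequence of $\subtree{=l}{\epsilon_i}$ converging to $\epsilon_i$, with $r$ itself discarded.

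For clause (1): suppose $\descolor(j_1,l)=\descolor(j_2,l)=1$ with $l<j_1<j_2<k$. Picking $\alpha$ with $\CBr{\alpha}=j_2$, some $\beta\in\subtree{=j_1}{\alpha}$, and some $\gamma\in\subtree{=l}{\beta}\subseteq\subtree{=l}{\alpha}$, the pairs $\set{\alpha,\beta},\set{\beta,\gamma},\set{\alpha,\gamma}$ receive colours $\descolor(j_2,j_1),\descolor(j_1,l),\descolor(j_2,l)$; so if $\descolor(j_2,j_1)=1$ we already have a triangle. Hence we may assume $\descolor(j_2,j_1)=0$, and it is this extra constraint, together with the two witnesses, that the $\omega^2$-construction exploits. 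I then build the fan-of-fans of the previous paragraph at level $l$ inside a fixed $\subtree{}{\alpha}$ with $\CBr{\alpha}=l+2$ chosen below a node of CB rank $j_2$. Independent columns and heads are always extractable: within any $\subtree{=l}{\epsilon}$ (resp. $\subtree{=l+1}{r}$) each point has, by clause (i), a fixed colour on a cofinite subset; if infinitely many points had colour $1$ there, three of them would be pairwise adjacent, a triangle; so cofinitely many points have colour $0$ on a cofinite subset, i.e.\ finite degree in the set, and a greedy choice yields an infinite independent cofinal subsequence. The remaining ``diagonal'' pairs --- between distinct columns, between a head $\epsilon_i$ and a foreign column, and among the heads --- are governed by clause (i) for $\theta\in\set{\alpha}\cup\subtree{}{\alpha}$, hence have a fixed colour on a large set, which I normalise to $0$ by thinning; to make these $\omega$ many thinnings mutually consistent I first apply Lemma~\ref{harassmentLemma} to the domination that $\descolor(j_1,l)=1$ forces between the relevant pair of levels, so that a single large set of heads is simultaneously colour-$0$-adjacent to a single large set of leaves. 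The pairs $\set{\epsilon_i,\gamma}$ with $\gamma$ in $\epsilon_i$'s own column carry colour $\descolor(l+1,l)$, which the construction needs to be $0$; arranging this (shifting the construction up one CB rank if $l+1\in\set{j_1,j_2}$) is precisely where the pressure towards ``at most one'' appears.

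For clauses (2) and (3): suppose $\domcolor(j,l_1)=\domcolor(j,l_2)=1$ (resp.\ $\domcolor(j_1,l)=\domcolor(j_2,l)=1$). Fixing a vertex $\alpha$ of the relevant CB rank, it is colour-$1$-adjacent to a large subset of each global level in question; by triangle-freeness each of these is independent and no edge runs between the two, and since the intersection of two large sets is large, a ``chained'' $\domcolor$-value equal to $1$ would again give a triangle, so we may assume those vanish. Carving a closed copy of $\omega^2$ out of the union is now driven by Lemma~\ref{harassmentLemma}: reading the relation $\domcolor(j,l_i)=1$ across the level $\subtree{=j}{\omega^k}$ gives an $\oppress$-relation (indeed a $\harass$-relation) between levels, and the lemma converts ``every rank-$j$ node sees a large chunk of rank-$l_i$ nodes'' into ``a large set of rank-$l_i$ nodes each see a large chunk of rank-$j$ nodes'', which is exactly what lets one assemble $\omega$ columns inside one level together with their limit points --- which necessarily have strictly higher CB rank, and whose colours to everything are now controlled by clause (iii), the governing node being $\omega^k$. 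Clauses (2) and (3) are the two ways of slicing the same between-levels bipartite picture --- fixing the ``head'' rank versus the ``leaf'' rank --- and come out by the same construction with the roles of the two indices exchanged.

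The step I expect to be the real obstacle is maintaining genuine topological closedness while forcing independence: the sets $\subtree{=l}{\theta}$ that triangle-freeness hands us as independent are, for $\CBr{\theta}>l+1$, never closed in their suprema, so any independent closed $\omega^2$ must mix CB ranks, and the limit points one is compelled to adjoin are joined by colour $1$ to precisely the sequences converging to them exactly when a certain $\descolor$- (or $\domcolor$-) value equals $1$. Reconciling ``needs two independent directions'' with ``each adjoined limit must be colour-$0$ below it'', and making the $\omega$ many thinnings of the columns simultaneously valid via Lemma~\ref{harassmentLemma}, is the delicate point; it is this tension that ultimately forces each of the three matrices of colours to contain at most one $1$ in the indicated line.
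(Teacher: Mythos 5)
The paper does not actually prove this lemma --- it is imported verbatim from \cite{MermOrd} (Lemma 4.3 there) --- so there is no in-paper argument to compare against; judging your proposal on its own terms, it is a strategy outline whose combinatorial core is missing, and in clause (1) the construction you describe cannot work as stated. Your fan-of-fans has heads of CB rank $l+1$ and columns inside $\subtree{=l}{\epsilon_i}$, so the head-to-own-column pairs carry colour $\descolor(l+1,l)$, which by canonicity clause (ii) is all-or-nothing and therefore must equal $0$ for the construction to succeed. Nothing in the hypotheses gives you this; worse, if $j_1=l+1$ it equals $1$ by assumption and the construction is outright impossible. Your proposed repair (``shifting the construction up one CB rank'') merely relocates the obligation to $\descolor(l+2,l+1)$, about which you again know nothing. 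The real issue is that you never use the two witnesses $j_1,j_2$ jointly: from $\descolor(j_1,l)=\descolor(j_2,l)=1$ and triangle-freeness you correctly derive $\descolor(j_2,j_1)=0$ and then abandon it, whereas that is precisely the colour that should sit between a head and its own column --- i.e.\ the closed copy of $\omega^2$ should be assembled with heads of rank $j_2$ and columns of rank $j_1$, the colour-$1$ edges down to level $l$ then supplying, via triangle-freeness, the independence of the columns, of the heads, and of the cross pairs (after thinning).

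For clauses (2) and (3) you lean on Lemma~\ref{harassmentLemma}, but it does not apply: $\harass$ requires each point on the dominating side to have a \emph{finite} neighbourhood in the dominated side, whereas $\domcolor(j,l_i)=1$ gives each rank-$j$ point a \emph{large} (hence co-small, certainly infinite) colour-$1$ neighbourhood in $\subtree{=l_i}{\omega^k}$. It is also unnecessary. For (2) a single $\alpha$ of rank $j$ suffices: $\neighbours{\alpha}$ contains large subsets of the two distinct levels $\subtree{=l_1}{\omega^k}$ and $\subtree{=l_2}{\omega^k}$, it is independent by triangle-freeness, and since largeness of a subset of $\subtree{=l_1}{\omega^k}$ localises to largeness of its trace on $\subtree{=l_1}{\eta}$ for a large set of $\eta$ of rank $l_2$, the heads and their convergent columns can be chosen entirely inside $\neighbours{\alpha}$, making independence automatic --- no control of the adjoined limit points ``by clause (iii)'' is needed. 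For (3) the observation you are missing is that any two points of $\subtree{=j_1}{\omega^k}\cup\subtree{=j_2}{\omega^k}$ have colour-$1$ neighbourhoods meeting $\subtree{=l}{\omega^k}$ in large, hence intersecting, sets, so by triangle-freeness that entire union is independent and a closed copy of $\omega^2$ sits inside it with heads at the higher of the two ranks. In each clause your text accurately locates the tension between closedness and independence but does not resolve it, so the proposal falls short of a proof.
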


\begin{numberedRemark}
In the full definition of a canonical colouring of some arbitrary $\delta<\omega^\omega$, the functions $\descolor$, $\domcolor$ take additional parameters besides the $\CB$-rank of the ordinals --- indices of summands in the Cantor normal form of $\delta$. When colouring $\omega^k$, however, there is a unique summand in the Cantor normal form. With respect to the notation of the full definition, in this text $\descolor(j,l)$, $\domcolor(j,l)$ are shorthand for $\descolor(1,j,l)$, $\domcolor(1,j;1,l)$, respectively.
\end{numberedRemark}

\section{Upper bound}

	\begin{lemma}
		\label{omega^k neighbours}
	Let $G=(\delta, E)$ be a triangle free graph on some ordinal $\delta$. Let $A\subseteq \delta$ with $\otp{A} = \omega^k$ and let $B\subseteq\delta$ with $\otp{B}=\omega$ be such that $A\harass B$. Then there exists some $b\in B$ such that $\otp{\neighbours{b}\cap A} = \omega^k$.
	\end{lemma}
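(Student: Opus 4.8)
The natural line of attack is induction on $k$, and to make the induction self‑feeding it is convenient to aim for the formally stronger statement that $\otp{\neighbours{b}\cap A}=\omega^k$ holds for \emph{cofinitely many} $b\in B$. For $k=1$ this is immediate from Lemma \ref{harassmentLemma}: taking $A_0\subcof A$ and $B_0\subseteq B$ cofinite as supplied there, every $b\in B_0$ has $\neighbours{b}\cap A_0$ cofinite in the infinite set $A_0$, hence $\neighbours{b}\cap A$ is infinite and $\otp{\neighbours{b}\cap A}=\omega$.

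For the inductive step one writes $A=\bigsqcup_{i<\omega}A_i$ as an increasing union of consecutive intervals of order type $\omega^k$. A short computation with ordinal addition shows that, for $C\subseteq A$, one has $\otp{C}=\omega^{k+1}$ precisely when $C\cap A_i$ has order type $\omega^k$ for infinitely many $i$; so it suffices to produce cofinitely many $b$ each \emph{fully meeting} (intersecting in order type $\omega^k$) infinitely many blocks $A_i$. Assume this fails; discarding the finitely many good $b$ and restricting to the complementary infinite subset of $B$ (which still satisfies $A\harass\,\cdot\,$), assume from now on that no $b\in B$ fully meets infinitely many blocks.

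The first part of the argument is to locate an infinite $B^{\ast}\subseteq B$ harassed by infinitely many of the blocks $A_i$. If no such $B^{\ast}$ existed, then for every infinite $B^{\ast}\subseteq B$ only finitely many $i$ would satisfy $A_i\oppress B^{\ast}$, and one could then build a cofinal $X\subseteq A$ with $B\setminus\neighbours{X}$ infinite — contradicting $A\oppress B$ — by a diagonalization over a sequence of blocks $A_{i_0},A_{i_1},\dots$: maintain a decreasing chain of infinite subsets of $B$, at stage $t$ pick $X_t\subcof A_{i_t}$ witnessing failure of $A_{i_t}\oppress(\text{current set})$, and reserve one element of $B$ at each stage so that it is thereafter kept out of $\neighbours{X}$. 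The standing assumption (no $b$ fully meets infinitely many blocks) is what guarantees that the finitely many reserved elements generate sets meeting only finitely many blocks in order type $\omega^k$, leaving room to continue. Given such a $B^{\ast}$ and blocks $A_{i_0},A_{i_1},\dots$ with $i_0<i_1<\cdots$ and $A_{i_t}\harass B^{\ast}$, the inductive hypothesis applied inside each $A_{i_t}$ yields, for every $t$, a finite set $F_t\subseteq B^{\ast}$ such that every $b\in B^{\ast}\setminus F_t$ fully meets $A_{i_t}$; one then needs a single $b\in B^{\ast}$ that avoids $F_t$ for infinitely many $t$, for such a $b$ fully meets infinitely many blocks of $A$ — the desired contradiction.

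I expect this last extraction to be the main obstacle: a priori the finite exceptional sets $F_t$ could conspire to cover $B^{\ast}$. Resolving it should require triangle‑freeness, together with a coordinated choice of $B^{\ast}$, of the blocks $i_t$, and of the reservations — one can arrange (applying Lemma \ref{harassmentLemma} inside each block) that the vertices accounting for a given block do so through a common cofinal subset of it, so that triangle‑freeness of the ambient graph forces the relevant subset of $B^{\ast}$ to be independent, which is what ultimately keeps the exceptional sets small and mutually compatible. The comparatively minor technical point in the first step — preserving cofinality of $X_t$ inside $A_{i_t}$ when one deletes the order‑type‑$<\omega^k$ traces of the reserved elements — should be routine given the ordinal arithmetic already used in the block decomposition.
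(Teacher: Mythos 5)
Your overall strategy --- induction on $k$ over a decomposition of $A$ into consecutive blocks $A_i$ of type $\omega^k$ --- is not unreasonable, and the strengthening to ``cofinitely many $b$'' is harmless (since $A\harass B'$ holds for every infinite $B'\subseteq B$, the strengthened statement is in fact equivalent to the lemma). But as written the argument has two genuine gaps, one of which you flag yourself. The decisive one is the final extraction: from finite sets $F_t\subseteq B^{\ast}$ such that every $b\in B^{\ast}\setminus F_t$ fully meets $A_{i_t}$, you need a single $b$ lying outside $F_t$ for infinitely many $t$, and nothing you have set up rules out, say, $F_t=\set{b_0,\dots,b_t}$, in which case every $b\in B^{\ast}$ avoids only finitely many of the $F_t$. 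You defer this to ``triangle-freeness plus a coordinated choice,'' but no actual mechanism is given; moreover the appeal to triangle-freeness is a warning sign, since the lemma is true without that hypothesis (the paper's proof never uses it), so an argument whose crux rests on it is unlikely to close. The second gap is in the diagonalization establishing your dichotomy: a reserved element $b_r$ must be kept out of $\neighbours{X_s}$ for all later stages $s$, so $X_s$ must be chosen inside $A_{i_s}\setminus\bigcup_{r<s}\neighbours{b_r}$; but the witness to $A_{i_s}\not\oppress B_s$ is some particular cofinal $X\subseteq A_{i_s}$, and $X$ may be wholly contained in $\neighbours{b_r}$ even when $\otp{\neighbours{b_r}\cap A_{i_s}}<\omega^k$ (a cofinal subset of a copy of $\omega^k$ can have order type as small as $\omega$). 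Failure of $\oppress$ does not pass from a set to its cofinal subsets, so ``deleting the traces of the reserved elements'' is not the routine step you claim.

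All of this machinery is avoidable: the paper argues directly, with no induction and no blocks. Set $m_a=\min(\neighbours{a}\cap B)$ for $a\in A$. If for every $b\in B$ the set $\setcol{a\in A}{m_a>b}$ were cofinal in $A$, then --- using that $\neighbours{Y}\cap B$ is finite for every finite $Y\subseteq A$, by the second clause of $\harass$ --- one builds a cofinal $X\subseteq A$ with $B\setminus\neighbours{X}$ infinite, contradicting $A\oppress B$. Hence for some $M\in B$ the set $\setcol{a\in A}{m_a\leq M}$ contains a final segment of $A$, of order type $\omega^k$; since there are only finitely many $b\leq M$ in $B$ and $\omega^k$ is additively indecomposable, some single $b$ satisfies $\otp{\setcol{a\in A}{m_a=b}}=\omega^k$, and this $b$ works. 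The ``minimum of the neighbourhood'' device is exactly what replaces your entire inductive scaffolding, and it is worth internalizing as the standard way to convert an oppression hypothesis into one vertex with a large neighbourhood.
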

	
	\begin{proof}
	For each $a\in A$, let $m_a = \min(\neighbours{a}\cap B)$.
	
	Assume that for each $b\in B$ the set $\setarg{a\in A}{m_a > b}$ is cofinal in $A$. Then whenever $Y\subseteq A$ is finite, there exists some arbitrarily large $a\in A$ such that $\setarg{b\in B}{\max(\neighbours{Y}\cap B) < b <m_a}$ is not empty. Thus, extending $Y$ at each stage by such a sufficiently large element $a$, we can construct inductively a cofinal set $X\subseteq A$ such that $B\setminus\neighbours{X}$ is infinite in $B$. This contradicts $A\harass B$.
	
	Therefore, there must exist some $M\in B$ such that $\setarg{a\in A}{m_a > M}$ is not cofinal in $A$. In particular, the set $\setarg{a\in A}{m_a\leq M}$ contains a set of order type $\omega^k$. By additive indecomposability of $\omega^k$, there is some $b\leq M$ such that $\setarg{a\in A}{m_a = b}$ is of order type $\omega^k$. So $\otp{\neighbours{b}\cap A}=\omega^k$.
	\end{proof}

	\begin{fact}[Specker, \cite{Specker}]
	\label{SpeckerFact}
	$\omega^2\to (\omega^2,3)$, i.e.,
	every triangle-free graph on $\omega^2$ contains an independent set of order type $\omega^2$.
	\end{fact}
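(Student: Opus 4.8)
The plan is to use the classical decomposition of $\omega^2$ into $\omega$ copies of $\omega$, together with the infinite Ramsey theorem and a diagonal construction. First I would split $\omega^2$ into consecutive intervals $I_0 < I_1 < \cdots$, each of order type $\omega$. A triangle-free graph contains no infinite clique, so by the infinite Ramsey theorem each $I_n$ has an infinite independent subset; replacing each $I_n$ by one such subset only passes to an induced subgraph (still triangle-free), keeps the intervals consecutive, and keeps each of order type $\omega$, so I may assume every $I_n$ is itself independent. The goal then becomes: find indices $n_0 < n_1 < \cdots$ and infinite $B_{n_i} \subseteq I_{n_i}$ with no edge between $B_{n_i}$ and $B_{n_j}$ for $i\neq j$. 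Since the $I_{n_i}$ are cofinal in $\omega^2$, the union $\bigcup_i B_{n_i}$ is then an independent set of order type $\omega\cdot\omega = \omega^2$, as desired.

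I would construct the $B_{n_i}$ recursively while maintaining an infinite ``pool'' of still-usable blocks, each block $b$ in the pool carrying an infinite independent ``reservoir'' $D_b \subseteq I_b$ with no edge to any already-committed set. At stage $i$: take the least pool block $n_i$, commit some infinite $B_{n_i} \subseteq D_{n_i}$ (after possibly thinning $D_{n_i}$ further), discard $n_i$, and for each remaining block $b$ replace $D_b$ by $D_b \setminus \neighbours{B_{n_i}}$, keeping $b$ in the pool iff this is still infinite. Since the least element has been discarded at each step, the $n_i$ strictly increase and are therefore cofinal, and the reservoir condition guarantees $\bigcup_i B_{n_i}$ is independent of order type $\omega^2$. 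Triangle-freeness enters through a dominance bound: for every vertex $v$, $\neighbours{v}$ is independent, so if $\neighbours{v}$ met cofinally many of the blocks in infinite sets then $\neighbours{v}$ would already contain an independent set of order type $\omega^2$; hence I may assume every vertex has infinite neighbour-intersection with only finitely many blocks.

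The crux — and the step I expect to be the main obstacle — is showing the pool never runs dry: that at stage $i$ the set $B_{n_i}$ can be chosen so that $\neighbours{B_{n_i}}$ does not almost-cover the reservoir of cofinitely many pool blocks. The argument is a pigeonhole analysis of the bipartite pattern between $D_{n_i}$ and the pool: the dominance bound controls the ``heavy'' contributions (blocks meeting $\neighbours{b}$ infinitely for some $b \in B_{n_i}$), while a second pigeonhole, again using triangle-freeness, handles the ``light'' contributions that are individually finite but collectively fatal; if neither can be controlled one is forced either into an independent set of order type $\omega^2$ already sitting inside $\neighbours{D_{n_i}}$ or into a triangle. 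This bookkeeping is exactly the content of Specker's theorem, which is why we quote it rather than reprove it. Alternatively, within the framework of this paper one may first replace the colouring by a canonical one via \cite[Proposition 3.11]{MermOrd} and then run a bounded case analysis on the finitely many colour-parameters $\descolor$, $\domcolor$ available for $\omega^2$, each case either exhibiting an independent set of order type $\omega^2$ (typically among the rank-$0$ points, which already have order type $\omega^2$) or forcing a triangle.
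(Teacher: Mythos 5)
The first thing to say is that the paper does not prove this statement: it is imported verbatim as a known theorem of Specker and used as a black box (once, inside the Claim in the proof of Proposition \ref{upperbound}). So there is no proof of the paper's to compare yours against, and your closing remark that the hard bookkeeping ``is exactly the content of Specker's theorem, which is why we quote it rather than reprove it'' coincides with the paper's own stance. If the intended deliverable is what the paper does, the correct move is simply to cite \cite{Specker} and stop.

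As a standalone proof, however, your sketch has a genuine gap, and it sits exactly where you flag it. The reductions you do carry out are sound: each block $I_n$ can be made independent by Ramsey's theorem, and if some vertex $v$ has $\neighbours{v}$ meeting infinitely many blocks in infinite sets, then $\neighbours{v}$ is independent (triangle-freeness) of order type $\omega^2$ (a subset of $\omega^2$ has order type $\omega^2$ precisely when it meets infinitely many blocks infinitely), so one may assume every neighbourhood is ``light'' on all but finitely many blocks. But this does not control $\neighbours{B_{n_i}}$ for an \emph{infinite} $B_{n_i}=\set{b_1,b_2,\dots}$: for example, $b_m$ could be joined to the first $m$ points of every later block, so that $\neighbours{B_{n_i}}$ swallows every later block entirely and the pool dies at the first stage. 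In that configuration the graph defeats itself --- any two points $x,y$ of later blocks lie in the common neighbourhood of some single $b_m$, hence are non-adjacent, and the union of the later blocks is an independent set of order type $\omega^2$ --- but detecting this requires assembling independence from pairwise co-neighbourhood constraints ranging over all of $B_{n_i}$, not a ``second pigeonhole'' and not an independent set found inside the neighbourhood of one vertex as your phrasing suggests. That assembly \emph{is} Specker's theorem; your proposal names the obstacle accurately but does not overcome it. (Your alternative route --- canonicalize via \cite[Proposition 3.11]{MermOrd} and case-split on the finitely many parameters, e.g.\ $\domcolor(0,0)=1$ yields a triangle because two large neighbourhoods of adjacent rank-$0$ points must intersect, while $\domcolor(0,0)=0$ permits fusing an independent $\omega^2$ out of rank-$0$ points --- is viable and closer in spirit to this paper, but it is likewise only named, not executed.)
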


\begin{prop}
	\label{upperbound}
	$\closedramseynumber{\omega^2}{3}\leq \omega^6$
	\end{prop}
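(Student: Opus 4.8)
The plan is to reduce to a canonical colouring, assume toward a contradiction that it is triangle-free with no closed independent copy of $\omega^2$, and then construct such a copy using the room afforded by the six CB-ranks below $\omega^6$.

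By the canonicalization theorem quoted above it suffices to show that every canonical triangle-free graph $G=(\omega^6,E)$ contains a closed independent set of order type $\omega^2$; indeed, any colour-$1$ triangle already gives a closed monochromatic set of order type $3$. So suppose $G$ is canonical and triangle-free but has no closed independent $\omega^2$. Then Lemma~\ref{edgeColourScarce} applies and the colour functions are sparse: $\descolor$ takes value $1$ at most once in each column, and the $1$'s of $\domcolor$ form a partial matching on the $6\times 6$ grid of ranks $\{0,\dots,5\}$.

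Next I would fix the combinatorial target. A set $Y$ that is closed in its supremum with $\otp{Y}=\omega^2$ must have the form $Y=\bigcup_{m<\omega}\bigl(C_m\cup\{t_m\}\bigr)$ with $t_0<t_1<\cdots$ cofinal in $\sup Y$ and each $C_m$ an $\omega$-sequence cofinal in $t_m$ lying above $t_{m-1}$; conversely, every set of this shape is such a closed copy of $\omega^2$. I would moreover take each $t_m$ of CB-rank $p_m\geq 1$ and each $C_m\subseteq\subtree{=0}{t_m}$, with the $t_m$ pairwise $\treeorder$-incomparable (so that the subtrees $\subtree{}{t_m}$ meet only above, and in particular no $C_{m'}$ meets $\subtree{=0}{t_m}$ for $m'\neq m$). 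Then, by canonicity, an edge of $Y$ joining $t_m$ to its own $C_m$ has colour $\descolor(p_m,0)$, while every other edge of $Y$ --- a cross edge between two blocks, or an internal edge of a single $C_m$ --- merely has to dodge a large colour-$1$ set, controlled by the relevant values of $\descolor$, $\domcolor$ (for instance $\domcolor(p_m,p_{m'})$, $\domcolor(p_m,0)$, $\domcolor(0,p_{m'})$, $\domcolor(0,0)$) together with triangle-freeness. Running a finite-stage construction, $Y$ can be made independent provided the ranks $p_m$ can be chosen so that all the finitely many $\descolor$/$\domcolor$ entries met along the way are $0$; that such a choice exists is where the sparsity above, together with the availability of six ranks, is used.

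The remaining cases are those in which the diagonal of $\domcolor$, or one of its low rows or columns, is forced to equal $1$ and obstructs the previous paragraph. For these I would peel off a colour-$1$ neighbourhood of a single carefully chosen vertex via Lemma~\ref{omega^k neighbours} --- this preserves a power-of-$\omega$ order type, so the bulk of the $\treeorder$-tree survives (only shorter), while destroying the offending dominant relation --- and iterate, invoking Fact~\ref{SpeckerFact} (through Lemma~\ref{harassmentLemma}) to keep an independent $\omega^2$ alive in what remains. I expect the main obstacle to be closure: both Specker's theorem and the passage to neighbourhoods only yield sets of order type $\omega^2$, not sets closed in their suprema, and the canonical rules pin down edges only ``up to a large set'', so the limit points $t_m$ and their approximating sequences $C_m$ must be interleaved with enough care that no limit point is dropped and no colour-$1$ edge slips between distinct blocks. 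Managing this bookkeeping through the case analysis, without exhausting the finite supply of ranks, is the heart of the proof, and it is precisely the height of the tree of $\omega^6$ that lets it go through.
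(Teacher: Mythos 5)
Your framing --- canonicalize, suppose there is no closed independent copy of $\omega^2$, and invoke Lemma~\ref{edgeColourScarce} --- is exactly how the paper begins, and your reduction of the colour-$1$ side to triangle-freeness is correct. But the core of your plan has a gap: it is not true that once the finitely many relevant $\descolor$/$\domcolor$ values are $0$ a ``finite-stage construction'' produces an independent closed copy of $\omega^2$. The problem is the edges running \emph{upward} from a block $C_m$ to the later limit points $t_{m'}$, $m'>m$. A value $\domcolor(p_m,p_{m'})=0$ only guarantees that each single point of $C_m$ is non-adjacent to a large set of candidates for $t_{m'}$; since $C_m$ is infinite, the union of the exceptional sets over all of $C_m$ can swallow every candidate, and no finite-stage bookkeeping escapes this. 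This is precisely the relation $A\oppress B$ of the paper, and the paper's proof is organized around the resulting dichotomy, which your sketch does not contain. Concretely: the paper uses Lemma~\ref{edgeColourScarce} only against the top rank, obtaining two ranks $t_1<t_2<5$ with $\descolor(5,t_j)=\domcolor(5,t_j)=\domcolor(t_j,5)=0$ (this is the only place the exponent $6$ is used), sets $h_i=\omega^5\cdot i$ and $W^{t_j}_i=\subtree{=t_j}{h_i}$, and asks whether $W^{t_j}_k\oppress\setarg{h_i}{i>k}$. If oppression fails (hereditarily), one really can dodge all edges and assemble a closed independent $\omega^2$, using Ramsey's theorem and Fact~\ref{SpeckerFact}; if it holds everywhere, then $\domcolor(t_j,5)=0$ upgrades it to $W^{t_j}_k\harass\setarg{h_i}{i>k}$, and Lemma~\ref{harassmentLemma} together with Lemma~\ref{omega^k neighbours} produce a single $h_i$ whose neighbourhood contains a closed copy of $\omega^2$ (limit points from rank $t_2$, converging sequences from rank $t_1$) --- independent for free by triangle-freeness.

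Your last paragraph does sense this difficulty (``the canonical rules pin down edges only up to a large set''), but the repair you propose runs the key lemma backwards: in the paper, Lemma~\ref{omega^k neighbours} is not used to \emph{delete} the neighbourhood of an offending vertex but to show that the neighbourhood of some $h_i$ is where the independent set lives. Note also why two ranks are needed at that final step: Lemma~\ref{harassmentLemma} puts the order-type-$\omega$ family of limit points cofinitely inside $\neighbours{h_i}$, while Lemma~\ref{omega^k neighbours} is what salvages a full order-type-$\omega^2$ portion of the lower-rank family inside the same neighbourhood; neither lemma alone suffices. Without the oppression/harassment dichotomy and this final step, the proposal does not close.
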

	\begin{proof}
	Let $G$ be a triangle-free canonical graph on $\omega^{6}$. Assume that
	\begin{itemize}
	\item[$(*)$] There is no independent copy of $\omega^2$ closed in its supremum in $G$.
	\end{itemize}
	By Lemma \ref{edgeColourScarce}
	\begin{itemize}
	\item 
	There is at most one $t<5$ such that $\descolor(5,t) = 1$;
	\item
	There is at most one $t<5$ such that $\domcolor(5,t) = 1$;
	\item
	There is at most one $t<5$ such that $\domcolor(t,5) = 1$.
	\end{itemize}
	
	Thus, there are $t_1<t_2<5$ such that $\descolor(5,t_j) = \domcolor(t_j,5)=\domcolor(5,t_j)=0$ for $j\in\set{1,2}$. For each $i\in\omega$, denote $h_i = \omega^5\cdot i$ and $W^{t_j}_i = \subtree{=t_j}{h_i}$.
	
	Since $\domcolor(5,t_j) = 0$, for each $i\in \omega$ and $j\in\set{1,2}$, there are at most finitely many $k>i$ such that $W^{t_j}_k\setminus \neighbours{h_i}$ is not large in $\subtree{=t_j}{h_k}$. Therefore, for every finite $I\subseteq \omega$, there are infinitely many $k\in\omega$ such that, for every $i\in I$, the set $W^{t_j}_k\setminus \neighbours{h_i}$ is large in $\subtree{=t_j}{h_k}$. Iteratively choosing in this manner a subset of $\setarg{h_i}{i\in\omega}$, we may assume that $W^{t_j}_k\setminus \neighbours{h_i}$ is large in $\subtree{=t_j}{h_k}$, for all $i<k\in\omega$ and $j\in\set{1,2}$.
	
	Now, as each $k\in\omega$ has finitely many $i<k$, and for each $i<k$ the set $W^{t_j}_k\setminus \neighbours{h_i}$ is large in $\subtree{=t_j}{h_k}$, we may thin out $\subtree{}{h_k}$ so that $W^{t_j}_k$ avoids $\neighbours{\setarg{h_i}{i<k}}$. Hence, we may assume there are no edges between $\set{h_i}$ and $W^{t_j}_k$, for all $i<k\in\omega$ and $j\in\set{1,2}$.
	
	\medskip
	\noindent\textbf{Claim.} Fix $j\in\set{1,2}$. Then $G$ can be thinned out so that for every $k\in\omega$, it holds that $W^{t_j}_k\oppress\setarg{h_i}{i>k}$.

\begin{proof}[proof of Claim]
	Assume the contrary.
	
	We will construct inductively $k_n,X_n,I_n$ such that at every stage:
	\begin{enumerate}
		\item
		$I_n\subseteq\omega$, $k_n\in I_n$, $X_n\subcof W^{t_j}_{k_n}$;
		\item
		$I_{n+1}\subseteq\setarg{i\in I_n}{i>k_n}$;
		\item
		$\neighboursInSet{X_n}{\setarg{h_i}{i\in I_{n+1}}} = \emptyset$.
	\end{enumerate}
	Set $I_0=\omega$. We describe the inductive step, given some $I_n\subseteq \omega$:
	
	Thin out $G$ so that $\subfan{\omega^6} = \setarg{h_i}{i\in I_n}$. By assumption, there is some $k\in I_n$ such that $W^{t_j}_k\not\oppress \setarg{h_i}{i>k}$. Fix $k_n$ to be such a $k$ and let $X_n\subcof W^{t_j}_{k_n}$ be of order type $\omega$ such that $\setarg{h_i}{i\in I_n}\setminus\neighbours{X_n}$ is infinite. Define $I_{n+1}=\setarg{i\in I_n}{i>k_n,~ h_i\notin\neighbours{X_n}}$.
	
	Now, let $Y = \setarg{h_{k_n}}{n\in\omega}$ and let $X = \bigcup_{n\in\omega} X_n$. Observe that $X\cup Y$ is a copy of $\omega^2$ closed in its supremum, with no edges crossing between $X$ and $Y$. By Ramsey's theorem and triangle-freeness, $Y$ has an infinite independent subset. By thinning out $X\cup Y$, we may assume that already $Y$ is independent. By Fact \ref{SpeckerFact}, $X$ has an independent subset of order type $\omega^2$. Again by thinning out $X\cup Y$ we may assume $X$ is independent. Hence, by construction, $X\cup Y$ is independent in contradiction to $(*)$.
\end{proof}
Assume that $G$ was thinned out as guaranteed by the claim. Observe that since $\domcolor(t_j,5)=0$, in fact $W^{t_j}_k\harass\setarg{h_i}{i>k}$ for any $j\in\set{1,2}$ and $k\in\omega$.

Fix some $k$. Choose arbitrarily some $A_2\subseteq W^{t_2}_k$ and $A_1\subseteq W^{t_1}_k$ such that both $A_1$ and $A_2$ are cofinal in $\subtree{}{h_k}$, and $A_1\cup A_2$ is a copy of $\omega^2$ closed in its supremum.

Since $A_2\harass\setarg{h_i}{i>k}$, by Lemma \ref{harassmentLemma} we may thin out so that $\neighbours{h_i}\cap A_2$ is cofinite in $A_2$ for every $i>k$. By Lemma \ref{omega^k neighbours}, there exists some $i>k$ such that $\neighbours{h_i}\cap A_1$ is of order type $\omega^2$. Then $\neighbours{h_i}\cap (A_1\cup A_2)$ contains an independent copy of $\omega^2$ closed in its supremum, which concludes the proof.
\end{proof}

\section{Lower bound}

For any ordinal $\alpha<\omega^\omega$, for each $i$, denote by $\alpha_i\in\omega$ the coefficient of $\omega^i$ in the Cantor normal form of $\alpha$. That is, $\alpha=\sum_{i\in\omega^*}\omega^i\cdot \alpha_i$. Denote $\layer{}{n} = \subtree{=n}{\omega^\omega}$. For every natural $n$, consider the sets of edges:
\begin{gather*}
E^n_1 = \setarg{(\alpha,\beta)\in \layer{}{n-1}\times \layer{}{n}}{\alpha\treedescendant\beta}
\\
E^n_2 = \setarg{(\alpha,\beta)\in \layer{}{n}\times \layer{}{n-2}}{\alpha<\beta}
\\
E^n_3 = \setarg{(\alpha,\beta)\in \layer{}{n-3}\times \layer{}{n}}{\alpha<\beta,\alpha\not\treeorder\beta, \max\set{\beta_i} < \alpha_{n-1}}
\\
E^n_4 = \setarg{(\alpha,\beta)\in\layer{}{n-4}\times \layer{}{n}}{\alpha<\beta, \alpha\not\treeorder\beta, \max\set{\beta_i} >\alpha_{n-1} + \alpha_{n-2}}
\end{gather*}
Let $G_\omega = (V_\omega,E_\omega)$ be the graph on $V_\omega = \omega^\omega$ with edges $E_\omega = \bigcup_{\substack{n\in\omega \\ i\leq 4}} E^n_i\cup (E^n_i)^{-1}$.

\begin{lemma}
The graph $G_{\omega}$ is triangle-free.
\end{lemma}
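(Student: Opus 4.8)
The plan is to suppose, towards a contradiction, that $G_\omega$ contains a triangle on vertices $x,y,z$, and to extract a contradiction from the rigidity of the edge sets $E^n_i$. The starting observation is that $E^n_i$ joins a vertex of $\layer{}{n-i}$ to a vertex of $\layer{}{n}$, so no edge of $G_\omega$ has both endpoints in one layer, and every edge has \emph{layer-distance} exactly $1$, $2$, $3$ or $4$. Hence the three vertices of the triangle occupy three distinct layers $\layer{}{a},\layer{}{b},\layer{}{c}$, say with $a<b<c$; and for each of the three unordered pairs of these layers, which is at some distance $d\le 4$, there is exactly one $E^n_i$ meeting both layers -- the one with $n$ the larger of the two indices and $i=d$ -- and its definition also fixes which endpoint of the edge is its first coordinate and which is its second. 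Since the edge between $\layer{}{a}$ and $\layer{}{c}$ has distance $c-a\le 4$ while $b-a\ge 1$ and $c-b\ge 1$, the pair $(b-a,c-b)$ is one of $(1,1)$, $(2,1)$, $(1,2)$, $(3,1)$, $(1,3)$, $(2,2)$, and I would treat these six configurations in turn.

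Before that I would isolate one elementary fact about $\treedescendant$: if $\beta\treedescendant\alpha$ and $\beta\le w<\alpha$, then $w\treeorder\alpha$. Indeed $\alpha$ is the least ordinal of CB rank $\CBr{\beta}+1$ lying above $\beta$, so the interval $[\beta,\alpha)$ contains no ordinal of CB rank greater than $\CBr{\beta}$, and therefore $\alpha$ is also the least ordinal of CB rank $\CBr{\beta}+1$ lying above any such $w$. I will combine this with the two routine facts that $\beta\treedescendant\alpha$ forces $\beta<\alpha$ and $\alpha=\beta+\omega^{\CBr{\alpha}}$ (so the Cantor normal forms of $\alpha$ and $\beta$ agree on all exponents $>\CBr{\alpha}$), and that $w\treeorder\alpha$ forces $\CBr{w}<\CBr{\alpha}$.

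Each of the six cases then collapses quickly. In $(1,1)$ the two distance-$1$ edges give $x\treedescendant y\treedescendant z$, hence $x<z$, whereas the distance-$2$ ($E_2$) edge gives $z<x$. In $(2,2)$ the two distance-$2$ edges and the distance-$4$ edge give $y<x$, $z<y$ and $x<z$ respectively, a cycle. In $(2,1)$ the edges give $y<x$, $y\treedescendant z$ (so $y<z$), and $x\not\treeorder z$; but $y\le x<z$, so the fact above forces $x\treeorder z$. In $(1,2)$ the edges give $x\treedescendant y$, $z<y$ and $x<z$; then $x\le z<y$ forces $z\treeorder y$, hence $\CBr{z}<\CBr{y}$, which is impossible since $z$ lies in a strictly higher layer than $y$. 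Finally, in $(1,3)$ and $(3,1)$ the contradiction is between the coefficient clauses of the $E_3$- and $E_4$-edges; writing $a$ for the smallest of the three layer indices and recalling the coefficient notation, in $(1,3)$ the edge $x\treedescendant y$ gives $y_{a+3}=x_{a+3}$, after which the $E_3$-edge forces $\max_i z_i<x_{a+3}$ while the $E_4$-edge forces $\max_i z_i>x_{a+3}+x_{a+2}\ge x_{a+3}$; and in $(3,1)$ the $E_3$-edge forces $y_i<x_{a+2}$ for every $i$, so every coefficient of $z=y+\omega^{a+4}$ is at most $x_{a+2}$, giving $\max_i z_i\le x_{a+2}\le x_{a+3}+x_{a+2}$, against the $E_4$-edge's requirement $\max_i z_i>x_{a+3}+x_{a+2}$.

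The main obstacle here is not any individual step but the bookkeeping: in each of the six configurations one must correctly identify which $E^n_i$ realizes each edge and in which orientation (which endpoint plays the ``$\alpha$'' and which the ``$\beta$'' of the definition), and then keep the index shifts $n-1,n-2$ consistently aligned with the common base index $a$. Once the interval fact for $\treedescendant$ is available there is no genuine combinatorial difficulty; the only slightly delicate points are that fact itself and its applications in cases $(2,1)$ and $(1,2)$.
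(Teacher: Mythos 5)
Your proof is correct and follows essentially the same route as the paper's: both rule out a hypothetical triangle by casing on the Cantor--Bendixson ranks of its three vertices (your six distance-pairs are exactly the paper's three cases on the middle rank together with their subcases on the bottom rank), and both use the same three kinds of contradictions --- order cycles, the interval fact that an ordinal squeezed between $\beta$ and its immediate $\treeorder$-successor must itself lie $\treeorder$-below that successor, and the clash between the coefficient clauses of $E_3$ and $E_4$. The only difference is organizational, and all of your individual case verifications check out against the definitions of the $E^n_i$.
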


\begin{proof}
Assume to the contrary that $\set{\alpha,\beta,\gamma}$ is a triangle. Without loss of generality assume $\CBr{\alpha} < \CBr{\beta} < \CBr{\gamma} = n$. Clearly $\CBr{\alpha}\geq n-4$.

Consider the case $\CBr{\beta} = n-1$, meaning $\beta\treedescendant\gamma$. We cannot have $\alpha\treeorder\gamma$, hence $\alpha\not\treeorder \beta$ and in particular $\CBr{\alpha}\neq\CBr{\beta} -1$. So $\CBr{\alpha} < n-2$ and it must be that $\alpha<\gamma$. If $\beta < \alpha$, we will have $\beta<\alpha<\gamma$ and $\beta\treeorder\gamma$, implying $\alpha\treeorder \gamma$, which is false. So $\alpha<\beta$, resulting in $\CBr{\alpha}\neq \CBr{\beta} -2$. So $\CBr{\alpha}=n-4$. Thus, $\max\set{\beta_i} < \alpha_{n-2} \leq \alpha_{n-1} + \alpha_{n-2} < \max\set{\gamma_i}$. But $\beta\treeorder\gamma$, therefore $\max\set{\beta_i} \geq \max\set{\gamma_i}-1$ in contradiction.

Consider the case $\CBr{\beta} = n-2$. Then $\alpha < \gamma$ and $\gamma <\beta$. If $\alpha\treeorder\beta$, then we would have $\gamma<\alpha$, so $\alpha\not\treeorder\beta$ and $\CBr{\alpha} \neq n-3$. Since $\beta \nless \alpha$, we have $\CBr{\alpha}\neq n-4$. So $\CBr{\alpha}$ cannot take any value, a contradiction.

We are left with $\CBr{\beta} = n-3$ and $\alpha\treedescendant\beta$. But now  $\alpha_{n-1} + \alpha_{n-2} < \max\set{\gamma_i} < \beta_{n-1}$, despite $\alpha_{n-1} = \beta_{n-1}$. We conclude that there are no triangles in $G_{\omega}$.
\end{proof}

\begin{notation}
For $X\subseteq \omega^\omega$, denote $\CBr{X} = \sup\set{\CBr{\alpha}}_{\alpha\in X}$.
\end{notation}

\begin{lemma}
If $X\subseteq G_{\omega}$ is an independent copy of $\omega^{k+1}$, closed in its supremum, with $X$ not cofinal in $\omega^\omega$, then $\CBr{X} \geq 5k$.
\end{lemma}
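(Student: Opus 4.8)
The plan is to prove this by induction on $k$, with the key being to understand how an independent closed copy of $\omega^{k+1}$ must "spread out" across CB ranks in $G_\omega$. Write $X$ for the independent closed copy of $\omega^{k+1}$, not cofinal in $\omega^\omega$, and let $n = \CBr{X}$; we want $n \ge 5k$. The base case $k=0$ is trivial since $\CBr{X}\ge 0$ always. For the inductive step, the idea is to locate inside $X$ a "sub-bundle" that looks like an independent closed copy of $\omega^k$ living strictly below rank $n$, apply the inductive hypothesis to conclude it reaches down to rank at least $5(k-1)=5k-5$ below some threshold, and then argue that the passage from that sub-bundle up to the top rank $n$ of $X$ must consume at least $5$ additional ranks — because of the edge sets $E^n_1,\dots,E^n_4$, any copy of $\omega^{k+1}$ that is independent cannot compress the CB ranks it uses too tightly.

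First I would set up the combinatorial skeleton. Since $\otp X = \omega^{k+1}$ and $X$ is closed in its supremum, thinning out we may regard $(X,\treeorder)$ as an $\omega$-branching tree of height $k+1$; its root $r$ has $\CBr r = n$, and its immediate $\treedescendant$-successors $\set{x_i}_{i\in\omega}$ each generate a closed copy of $\omega^k$. Choose one such $x = x_i$ with $\CBr x = m < n$ (actually $m=n-1$ for the "true" root of a Cantor-normal summand, but after thinning we only know $m<n$; one has to be careful here and pick $i$ so that $x_i$ genuinely has smaller rank, which is automatic). The subtree above $x$ in $X$ is an independent closed copy of $\omega^k$, not cofinal in $\omega^\omega$, so by the inductive hypothesis its CB-rank supremum is at least $5(k-1)$. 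Hence $X$ contains ordinals of CB rank as low as $\le m - 5(k-1)$ — wait, that is the wrong direction; rather, the subtree above $x$ already has its own rank-supremum $\ge 5(k-1)$, and these ranks all lie strictly below $\CBr x = m$. The real content is the gap between $m$ and $n$.

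The heart of the argument — and the main obstacle — is showing $n - m \ge 5$, i.e. that between the root of $X$ and its immediate descendants (in the $\treeorder$ sense) at least five CB ranks must be skipped, or more precisely that across the whole tree $X$ at least $5k$ ranks are used. Here one exploits independence against $E^n_1,\dots,E^n_4$ directly: if $X$ used ranks within a window of size less than $5$ around any "generation" of the tree, then two comparable nodes $\alpha \treedescendant \beta$ of $X$ at ranks differing by $1$ would be joined by an $E_1$-edge; nodes at rank difference $2$ with $\alpha<\beta$ are joined by $E_2$; and the $E_3,E_4$ relations at rank differences $3,4$ pin down the Cantor-normal coefficients so tightly (via the $\max\set{\beta_i}$ versus $\alpha_{n-1},\alpha_{n-1}+\alpha_{n-2}$ inequalities) that one cannot have infinitely many pairwise-incomparable, pairwise-nonadjacent nodes at those rank differences — the coefficient constraints force a clique or a contradiction with the tree structure. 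So any independent closed $\omega^{k+1}$ must genuinely stretch across $\ge 5$ ranks per level of its tree, giving $5k$ total. I would organize this as: (a) reduce to the tree picture; (b) prove a one-step lemma that the root's rank exceeds the rank-supremum of the subtree above any immediate descendant by at least $5$, by case analysis on the possible rank differences exactly mirroring the triangle-free proof; (c) combine (b) with the inductive hypothesis. The delicate point to get right in step (b) is the $E_3$/$E_4$ coefficient bookkeeping for non-$\treeorder$-comparable pairs, since that is where "not cofinal in $\omega^\omega$" and the boundedness of the coefficients $\set{\beta_i}$ actually get used.
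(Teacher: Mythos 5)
Your skeleton coincides with the paper's: induct on $k$, uniformize (by pigeonhole and thinning) so that $\CB\circ\rho_X$ is constant on each level of $X$, and prove a one\nobreakdash-step claim that the common rank $n$ of the top-level antichain $\alpha^j=\rho_X(\omega^k\cdot j)$ exceeds the common rank $m$ of the next level down by at least $5$, by ruling out the differences $1,2,3,4$ against $E^n_1,\dots,E^n_4$; combining this with the inductive hypothesis applied to $X\cap\subtree{}{\alpha^j}\setminus\set{\alpha^j}$ gives $n\ge m+5\ge 5(k-1)+5=5k$. (One correction to your setup: $X\cong\omega^{k+1}$ has no root, so the relevant quantity is the rank $n$ of the antichain $\set{\alpha^j}_{j\in\omega}$, not the rank of $\sup X$; it is $n$ that witnesses $\CBr{X}\ge 5k$.)

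However, the case analysis that you yourself label ``the main obstacle'' is the entire content of the lemma, and your sketch of it does not describe an argument that works. The exclusions are not single-branch statements but cross-branch ones, and none of them ``forces a clique'': each produces one forbidden edge between a second-level point and some $\alpha^{j'}$. Concretely: difference $2$ is ruled out because $\subtree{=n-2}{\alpha^{j+1}}\subseteq\neighbours{\alpha^j}$ --- the $E^n_2$-edge needs a rank-$(n-2)$ point \emph{above} $\alpha^j$ in the ordinal order, so it must be taken from the later branch $B_{j+1}$, not from below $\alpha^j$; difference $3$ is ruled out because any cofinal subset of $\subtree{=n-3}{\alpha^j}$ has unbounded coefficient $\gamma_{n-1}$ and hence contains an $E^n_3$-neighbour of $\alpha^{j+1}$; and difference $4$ is ruled out because $X\subseteq\subtree{}{\omega^r}$ for some $r$, and only finitely many $\alpha\in\subtree{=n}{\omega^r}$ satisfy $\max\set{\alpha_i}\le\gamma_{n-1}+\gamma_{n-2}$, so every rank-$(n-4)$ point $\gamma$ below $\alpha^j$ is $E^n_4$-adjacent to some $\alpha^{j'}$ --- this last case is the only place where the hypothesis that $X$ is not cofinal in $\omega^\omega$ enters, as you correctly suspected. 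You have located where the work lies but left it undone and misdescribed its shape, so as written the proposal is a plan rather than a proof.
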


\begin{proof}
We prove by induction on $k$. Let $X\subseteq G_\omega$ be an independent, closed in its supremum, copy of $\omega^{k+1}$ with $\sup X < \omega^\omega$. Let $\ordPersFunc{X}:\omega^{k+1} \to X$ be the bijection witnessing $\omega^{k+1}\cong X$. Due to $X$ being closed in its supremum, $\ordPersFunc{X}$ is continuous. Since $X$ is bounded, $\CBr{X}$ is finite.

For each $i<k+1$, we may consider $\CB\circ\ordPersFunc{X}$ as a colouring of $\subtree{=i}{\omega^{k+1}}$ in $\CBr{X}$ many colours. By iterating the pigeonhole principle, in $\subtree{=i}{\omega^{k+1}}$ there is a copy of $\omega^{k+1-i}$ on which $\CB\circ\ordPersFunc{X}$ is constant. By indecomposability of $\omega^{k+1-i}$, we may thin out $\omega^{k+1}$ (and $X$, accordingly) so that $\CB\circ\ordPersFunc{X}$ is constant on $\subtree{=i}{\omega^{k+1}}$. Iterating this thinning out $k+1$ times, we may assume $\CB\circ\ordPersFunc{X}$ is constant on $\subtree{=i}{\omega^{k+1}}$ for each $i<k+1$.
	
Denote $\alpha^j=\ordPersFunc{X}(\omega^k\cdot j)$, $n=\CBr{\alpha^1}$, $B_j = \ordPersFunc{X}[\subfan{\omega^k\cdot j}]$, and $m=\CBr{\beta}$ for some $\beta\in B_1$. Observe that by continuity of $\ordPersFunc{X}$, whenever $A\subseteq\omega^{k+1}$ with $\sup A=a\in \omega^{k+1}$, the set $\ordPersFunc{X}[A]$ must intersect $\subtree{}{\ordPersFunc{X}(a)}$.

\medskip
\noindent\textbf{Claim.} $m\leq n-5$.
\begin{proof}[proof of claim]
By $\subfan{\alpha^j}\subseteq\neighbours{\alpha^j}$, it cannot be that $B_j$ intersects $\subfan{\alpha^j}$. Therefore, $m = \CBr{B_j}\neq n-1$.

Similarly, since $\subtree{=n-2}{\alpha^{j+1}}\subseteq \neighbours{\alpha^j}$, we have $m = \CBr{B_{j+1}}\neq n-2$.

For any $C\subcof\subtree{=n-3}{\alpha^j}$, the set $\setarg{\gamma_{n-1}}{\gamma\in C}$ is unbounded in $\omega$. hence, there exists some $\gamma\in C$ with $\alpha^{j+1}\in\neighbours{\gamma}$. Thus, $B_j\cap \subtree{=n-3}{\alpha^j}$ cannot be a cofinal subset of $\subtree{=n-3}{\alpha^j}$ and so $m \neq n-3$.

Let $\gamma\in\subtree{=n-4}{\alpha^j}$. Since $X$ is not cofinal in $\omega^\omega$, there is some $r$ large enough such that $X\subseteq \subtree{}{\omega^r}$. There are only finitely many elements $\alpha\in \subtree{=n}{\omega^r}$ with $\max\set{\alpha_i} \leq \gamma_{n-1} + \gamma_{n-2}$, so $\neighboursInSet{\gamma}{\setarg{\alpha^j}{j\in\omega}} \neq \emptyset$. We conclude that $B_j$ is disjoint from $\subtree{=n-4}{\alpha^j} = \emptyset$ and so $m\neq n-4$.
\end{proof}

For each $j\in\omega$, the set $X\cap \subtree{}{\alpha^j}\setminus\set{\alpha^j}$ contains an independent, closed in its supremum, copy of $\omega^k$, which is not cofinal in $\omega^\omega$. So by the induction hypothesis, $m\geq 5(k-1)$. Combined with the claim, this results in $n\geq 5k$.
\end{proof}

\begin{corollary}
	\label{lowerbound}
$R^{cl}(\omega^{k+1},3) \geq \omega^{5k+1}$
\end{corollary}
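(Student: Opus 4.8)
The plan is to exhibit the single colouring $\col$ associated to $G_\omega$ as the witness, showing that for every natural $k$ and every $\theta<\omega^{5k+1}$ the restriction $\col\upharpoonright\pairs{\theta}$ satisfies $\theta\not\to_{cl}(\omega^{k+1},3)^2$; this is exactly what $R^{cl}(\omega^{k+1},3)\geq\omega^{5k+1}$ asserts. The colour-$1$ side is free: since $G_\omega$ is triangle-free its restriction to $\theta$ has no clique of size $3$, and a three-element set of ordinals is trivially closed in its supremum, so there is no closed colour-$1$ homogeneous set of order type $3$. Thus the whole problem reduces to ruling out, for each such $\theta$, a closed colour-$0$ homogeneous set of type $\omega^{k+1}$ in $\theta$, that is, an independent, closed-in-its-supremum copy $X\subseteq\theta$ of $\omega^{k+1}$.

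Assume such an $X$ exists. Since $\sup X\leq\theta<\omega^{5k+1}<\omega^\omega$, the set $X$ is not cofinal in $\omega^\omega$, so $X$ meets the hypotheses of the preceding lemma. I would then invoke not merely its stated conclusion $\CBr{X}\geq 5k$, but the intermediate fact produced in its proof: after thinning $X$ (which preserves independence, closure in the supremum, order type $\omega^{k+1}$, and containment in $\theta$), the function $\CB\circ\ordPersFunc{X}$ is constant on $\subtree{=k}{\omega^{k+1}}=\setarg{\omega^k\cdot m}{m\geq 1}$, and its common value there --- call it $n$ --- satisfies $n\geq 5k$.

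To finish, I would run a counting argument at this top level. The elements $\ordPersFunc{X}(\omega^k\cdot m)$ for $m\geq 1$ are infinitely many ordinals of $X$, all of CB rank $n$; but they lie in $X\subseteq\theta<\omega^{5k+1}$, and every ordinal below $\omega^{5k+1}$ has CB rank at most $5k$, so $n\leq 5k$ and hence $n=5k$. Therefore $X$ contains infinitely many ordinals of CB rank exactly $5k$. However, an ordinal below $\omega^{5k+1}$ of CB rank $5k$ has all its Cantor normal form exponents equal to $5k$, so it is of the form $\omega^{5k}\cdot m$ with $m\geq 1$; and since $\theta<\omega^{5k+1}$ forces $\theta<\omega^{5k}\cdot M$ for some finite $M$, only finitely many such ordinals lie below $\theta$ --- a contradiction. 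I expect the only delicate point to be the middle step: one must read off from the proof of the preceding lemma (not just its statement) that the value $5k$ is forced uniformly on the entire infinite top level $\subtree{=k}{\omega^{k+1}}$; once that is in hand, the fact that a $\theta<\omega^{5k+1}$ is bounded --- so that it caps CB ranks at $5k$ and contains only finitely many ordinals of that rank --- finishes things immediately.
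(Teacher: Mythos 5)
Your proposal is correct, and its overall skeleton (restrict the colouring of $G_\omega$ to $\theta<\omega^{5k+1}$, dispose of colour $1$ by triangle-freeness, and rule out an independent closed copy $X$ of $\omega^{k+1}$ via the CB-rank lemma) matches the paper. The one step where you genuinely diverge is how you upgrade ``$\CBr{X}\geq 5k$'' to a contradiction: you reach inside the proof of the lemma to extract the intermediate fact that, after thinning, $\CB\circ\ordPersFunc{X}$ is constant on the top level $\subtree{=k}{\omega^{k+1}}$ with value $n$, force $n=5k$ by boundedness, and then count that $\theta$ contains only finitely many ordinals of the form $\omega^{5k}\cdot m$. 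The paper instead keeps the lemma as a black box and applies it to \emph{every final segment} $X'$ of $X$ (each of which is again an independent, closed copy of $\omega^{k+1}$ by indecomposability), concluding that every final segment contains a point of CB rank $5k$ and hence that $X$ is cofinal in $\omega^{5k+1}$, which is the same contradiction. Both arguments are sound; your identification of the delicate point is accurate, and the only criticism is methodological: citing an unstated byproduct of a proof rather than a stated conclusion is fragile, and the final-segment trick shows it is also unnecessary here. If you prefer your route, the clean fix is to strengthen the lemma's statement to assert the constancy of the CB rank on the top level of a suitable thinning; otherwise, the final-segment application of the lemma as stated gets you the infinitely many points of rank $5k$ with no extra work.
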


\begin{proof}
Consider the subgraph induced by $G_\omega$ on $\omega^{5k+1}=\subtree{}{\omega^{5k+1}}\setminus \set{\omega^{5k+1}}$, call this subgraph $G$. Let $X$ be some independent, closed in its supremum copy of $\omega^{k+1}$ in $G$. Every final segment $X'\subseteq X$ of $X$ contains an independent, closed in its supremum copy of $\omega^{k+1}$, so by the above lemma $\CBr{X'} \geq 5k$. Then $X\subcof \omega^{5k+1}$ and in particular $X$ is not contained in the restriction of $G$ to any $\delta<\omega^{5k+1}$. Thus, for every $\delta<\omega^{5k+1}$, we have found a graph demonstrating $R^{cl}(\omega^{k+1},3) > \delta$.
\end{proof}

\begin{theorem}
	$\closedramseynumber{\omega^2}{3} = \omega^6$
\end{theorem}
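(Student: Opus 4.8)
The statement is an immediate consequence of the two bounds established in the preceding sections, so the plan is simply to record how they fit together. The upper bound $\closedramseynumber{\omega^2}{3}\leq\omega^6$ is exactly Proposition \ref{upperbound}. For the lower bound, I would invoke Corollary \ref{lowerbound} with $k=1$: it gives $\closedramseynumber{\omega^{k+1}}{3}\geq\omega^{5k+1}$, which for $k=1$ reads $\closedramseynumber{\omega^2}{3}\geq\omega^{6}$. Combining the two inequalities yields $\closedramseynumber{\omega^2}{3}=\omega^6$.

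The only thing worth spelling out is why the two results are genuinely about the same quantity, i.e.\ that the reduction to canonical colourings used in Proposition \ref{upperbound} does not weaken the conclusion. Here I would point to the theorem quoted from \cite{MermOrd} (Proposition 3.11 in that paper): given an arbitrary $\col:\pairs{\omega^6}\to\set{0,1}$, one first passes to a closed copy $X\subseteq\omega^6$ of order type $\omega^6$ on which $\col$ is canonical, applies Proposition \ref{upperbound} to the induced colouring on $X$ to extract a closed monochromatic set of order type $\omega^2$ (in colour $0$) or $3$ (in colour $1$), and observes that such a set inside $X$ is still closed in its supremum as a subset of the original $\omega^6$. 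This shows $\ramseynotation{\omega^6}{\omega^2,3}{2}{}$ and hence $\closedramseynumber{\omega^2}{3}\leq\omega^6$ in the literal sense of the definition.

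There is essentially no obstacle: both hard inputs (Proposition \ref{upperbound} and Corollary \ref{lowerbound}) are already proved, and the assembly is purely formal. The proof is therefore one or two lines.

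\begin{proof}
By Proposition \ref{upperbound} we have $\closedramseynumber{\omega^2}{3}\leq\omega^6$, and by Corollary \ref{lowerbound} applied with $k=1$ we have $\closedramseynumber{\omega^2}{3}\geq\omega^{5\cdot 1+1}=\omega^6$. Hence $\closedramseynumber{\omega^2}{3}=\omega^6$.
\end{proof}
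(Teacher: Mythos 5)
Your proof is correct and matches the paper's own argument exactly: combine Proposition \ref{upperbound} with Corollary \ref{lowerbound} at $k=1$. The extra remark about the canonical-colouring reduction is a reasonable clarification but not a departure from the paper's approach.
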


\begin{proof}
	Apply Corollary \ref{lowerbound} to $k=1$ and combine with Proposition \ref{upperbound}.
\end{proof}

\section{Acknowledgments}
The research was conducted at Ben-Gurion University of the Negev. The research was partially supported by ISF grant No. 181/16 and 1365/14.

\bibliographystyle{alpha}
\bibliography{myrefs}

\begin{thebibliography}{OAW19}

\bibitem[Bau86]{Baum}
James~E. Baumgartner.
\newblock Partition relations for countable topological spaces.
\newblock {\em J. Combin. Theory Ser. A}, 43(2):178--195, 1986.

\bibitem[CH17]{CaicedoHilton}
Andr{\'{e}}s Caicedo and Jacob Hilton.
\newblock Topological {R}amsey numbers and countable ordinals.
\newblock In Andr{\'{e}}s Caicedo, James Cummings, Peter Koellner, and Paul
  Larson, editors, {\em Foundations of Mathematics}, volume 690 of {\em
  Contemp. Math.}, pages 85--118. Amer. Math. Soc., Providence, RI, 2017.

\bibitem[ER56]{ErdRad}
P.~Erd\"os and R.~Rado.
\newblock A partition calculus in set theory.
\newblock {\em Bull. Amer. Math. Soc.}, 62:427--489, 1956.

\bibitem[Hil16]{HilPige}
Jacob Hilton.
\newblock The topological pigeonhole principle for ordinals.
\newblock {\em J. Symb. Log.}, 81(2):662--686, 2016.

\bibitem[HL10]{HajLar}
Andr\'as Hajnal and Jean~A. Larson.
\newblock Partition relations.
\newblock In {\em Handbook of set theory. {V}ols. 1, 2, 3}, pages 129--213.
  Springer, Dordrecht, 2010.

\bibitem[Mer19]{MermOrd}
Omer Mermelstein.
\newblock Calculating the closed ordinal {R}amsey number ${R}^{cl}(\omega\cdot
  2,3)^2$.
\newblock {\em Isr. J. Math.}, 2019.
\newblock To appear. https://doi.org/10.1007/s11856-019-1827-0.

\bibitem[OAW19]{OW19}
Diana Ojeda-Aristizabal and William Weiss.
\newblock Topological partition relations for countable ordinals.
\newblock {\em Fund. Math.}, 244(2):147--166, 2019.

\bibitem[Pn15]{Pina}
C.~Pi\~na.
\newblock A topological {R}amsey classification of countable ordinals.
\newblock {\em Acta Math. Hungar.}, 147(2):477--509, 2015.

\bibitem[Spe57]{Specker}
Ernst Specker.
\newblock Teilmengen von {M}engen mit {R}elationen.
\newblock {\em Comment. Math. Helv.}, 31:302--314, 1957.

\end{thebibliography}
\end{document}